\numberwithin{equation}{section}
\numberwithin{figure}{section}
\theoremstyle{plain}
\newtheorem{thm}{Theorem}[section]
\newtheorem{cor}[thm]{Corollary}
\newtheorem{lemma}[thm]{Lemma}
\newtheorem{rem}[thm]{Remark}
\theoremstyle{definition}
\newtheorem{dfn}[thm]{Definition}
\begin{document}

\title[S]{Bijective enumeration of rook walks}
\author[Alexander Haupt]{Alexander M. Haupt}
\address{(AH) Technische Universit\"at Hamburg, Institut f\"ur Mathematik, Am Schwarzenberg-Campus 3, 21073 Hamburg, Germany }
\email{alexander.haupt@tuhh.de}

\pagestyle{plain}

\date{\today}

\maketitle

\begin{abstract}
In this paper we answer a question posed by R. Stanley in his collection of Bijection Proof Problems (Problem 240). We present a bijective proof for the enumeration of walks of length $k$ a chess rook can move along on an $m \times n$ board starting and ending on the same square.
\end{abstract}

\section{Introduction}

\begin{dfn}
	Let $S_{m,n,k}$ be the set of walks of length $k$ a chess rook can move along on a rectangular board with width $m$ and height $n$, starting and ending on the bottom left square. 
\end{dfn}

\begin{rem}
	\label{remTorus}
	Note, first of all, that it does not make a difference whether the board is plane or on a torus, by which we mean that opposite edges are identified. This is because the set of allowed moves a rook could take does not change. Secondly we note, that by symmetry of the torus, the starting square does not make a difference to the number of rook walks.
\end{rem}

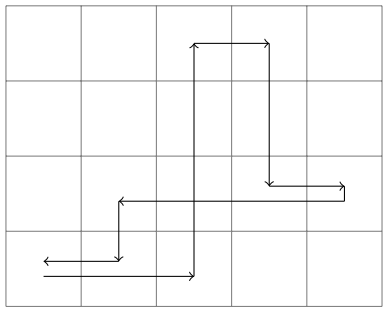
\begin{figure}[h]
	\centering
	\begin{tikzpicture}
	\draw[step=1cm,color=gray] (0,0) grid (5,4);
	\draw[->] (0.5,0.4) -> (2.5,0.4);
	\draw[->] (2.5,0.4) -> (2.5,3.5);
	\draw[->] (2.5,3.5) -> (3.5,3.5);
	\draw[->] (3.5,3.5) -> (3.5,1.6);
	\draw[->] (3.5,1.6) -> (4.5,1.6);
	\draw (4.5,1.6) -> (4.5,1.4);
	\draw[->] (4.5,1.4) -> (1.5,1.4);
	\draw[->] (1.5,1.4) -> (1.5,0.6);
	\draw[->] (1.5,0.6) -> (0.5,0.6);
	\end{tikzpicture}
	\caption{An example with $m=5, n=4, k=8$.}
	\label{fig:example}
\end{figure}

In his collection of Bijection Proof Problems \cite[Problem 240]{Stanley2015BIJECTIVEPP}, Stanley gives the number of rook walks.
\begin{thm}
	\label{theoremStanley}
	We have
	$$|S_{m,n,k}|=\frac{(m+n-2)^k + (n-1)(m-2)^k + (m-1)(n-2)^k + (m-1)(n-1)(-2)^k}{mn}.$$
\end{thm}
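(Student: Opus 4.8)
The plan is to exploit the Cartesian--product structure of the rook's move set on the torus (Remark~\ref{remTorus}) to reduce the two--dimensional count to two independent one--dimensional counts, and then to evaluate the one--dimensional count by an explicit bijection. On the torus a single rook move is either a nonzero ``horizontal shift'' $a\in\mathbb Z_m\setminus\{0\}$ (changing the column; $m-1$ choices) or a nonzero ``vertical shift'' $b\in\mathbb Z_n\setminus\{0\}$ (changing the row; $n-1$ choices), and the column of the rook after some steps depends only on the horizontal shifts taken, the row only on the vertical ones. Hence a walk of length $k$ from the origin is encoded by a ``colour pattern'' $P\subseteq\{1,\dots,k\}$ marking the horizontal steps, together with a sequence of nonzero $\mathbb Z_m$--shifts of length $|P|$ and a sequence of nonzero $\mathbb Z_n$--shifts of length $k-|P|$; and the walk is closed precisely when the horizontal shifts sum to $0$ in $\mathbb Z_m$ and the vertical shifts sum to $0$ in $\mathbb Z_n$. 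Writing $c_\ell(j)$ for the number of sequences $(a_1,\dots,a_j)\in(\mathbb Z_\ell\setminus\{0\})^j$ with $a_1+\dots+a_j\equiv 0\pmod\ell$ (equivalently, closed walks of length $j$ in $K_\ell$ from a fixed vertex), the choice of $P$, of the horizontal data and of the vertical data being independent, this gives at once the convolution
$$|S_{m,n,k}|=\sum_{j=0}^{k}\binom{k}{j}\,c_m(j)\,c_n(k-j).$$

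The heart of the matter is a bijective evaluation of $c_\ell(j)$, namely $\ell\,c_\ell(j)=(\ell-1)^j+(\ell-1)(-1)^j$. Let $N_r(j)$ be the number of length-$j$ sequences of nonzero residues mod $\ell$ with sum $\equiv r$, so $\sum_{r\in\mathbb Z_\ell} N_r(j)=(\ell-1)^j$ and $c_\ell(j)=N_0(j)$. Deleting the last entry of a nonzero $(j{+}1)$-sequence with sum $r$ yields a nonzero $j$-sequence whose sum is the residue $r$ minus that (nonzero) last entry, hence is $\ne r$; this map is a bijection onto the nonzero $j$-sequences with sum $\ne r$, so $N_r(j+1)=(\ell-1)^j-N_r(j)$. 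Consequently $N_0(j+1)-N_r(j+1)=-(N_0(j)-N_r(j))$, so by induction $N_0(j)-N_r(j)=(-1)^j$ for every $r\ne 0$ (the base case being $N_0(0)=1$, $N_r(0)=0$), the alternating sign being witnessed by the deletion bijection itself. Summing over $r$ gives $\ell\,N_0(j)=\sum_r N_r(j)+\sum_{r\ne 0}\bigl(N_0(j)-N_r(j)\bigr)=(\ell-1)^j+(\ell-1)(-1)^j$, which handles all $\ell\ge 1$ including the degenerate cases $\ell=1,2$.

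Finally, substituting $m\,c_m(j)=(m-1)^j+(m-1)(-1)^j$ and $n\,c_n(k-j)=(n-1)^{k-j}+(n-1)(-1)^{k-j}$ into the convolution and clearing denominators,
$$mn\,|S_{m,n,k}|=\sum_{j=0}^{k}\binom{k}{j}\bigl[(m-1)^j+(m-1)(-1)^j\bigr]\bigl[(n-1)^{k-j}+(n-1)(-1)^{k-j}\bigr],$$
and expanding the product into four sums and applying the binomial theorem to each collapses them to $(m+n-2)^k$, $(n-1)(m-2)^k$, $(m-1)(n-2)^k$ and $(m-1)(n-1)(-2)^k$, which is the claimed identity; the factor $mn$ on the left is the size of a translation orbit of $\mathbb Z_m\times\mathbb Z_n$ on the torus, consistent with Remark~\ref{remTorus}. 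I expect the main obstacle to be making the evaluation of $c_\ell(j)$ genuinely bijective rather than a disguised character sum over $\mathbb Z_\ell$: the $(-1)^j$ and the division by $\ell$ force one either to organise the deletion step as a sign-reversing involution on sequences of arbitrary (possibly zero) residues, or to phrase the $\tfrac1\ell$ as a quotient by a cyclic group action, and isolating the cleanest such formulation is where the real work lies.
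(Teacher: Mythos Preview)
Your argument is correct and shares the paper's overall architecture: project onto rows and columns to obtain the convolution $|S_{m,n,k}|=\sum_j\binom{k}{j}\,c_m(j)\,c_n(k-j)$ (this is exactly Lemma~\ref{twoToTwoOne}), evaluate the one--dimensional count, and recombine. Where you diverge from the paper is in how bijective the remaining two steps are. For the one--dimensional evaluation the paper constructs an explicit sijection $[m]\times S_{m,i}\leftrightarrow[m-1]^i+\alpha_i\times[m-1]$ (Lemma~\ref{OneDim}), whose nontrivial part is a concrete involution on non--alternating sequences; you instead derive the recurrence $N_r(j{+}1)=(\ell-1)^j-N_r(j)$ bijectively and then finish by induction and a sum over residues~--- valid, but, as you yourself flag at the end, this is not packaged as a single bijection and the factor $\ell$ and the sign $(-1)^j$ are handled algebraically. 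For the recombination the paper again supplies explicit sijections (Corollary~\ref{mainCor}, via a Garsia--Milne style sign--reversing involution, and Lemma~\ref{simpleLemma}) that collapse each of the four cross terms directly to $[m+n-2]^k$, $[n-1]\times[m-2]^k$, $[m-1]\times[n-2]^k$ and $[m-1]\times[n-1]\times\alpha_k\times[2]^k$, whereas you invoke the binomial theorem. So your proof establishes the numerical Theorem~\ref{theoremStanley} cleanly, and the underlying bijective ingredients (projection, last--entry deletion) are the same raw material the paper uses; but the paper goes further by assembling everything into a single weight--preserving sijection witnessing Theorem~\ref{thmMain}, which is the bijective form Stanley's problem actually requests.
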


Stanley states that the formula is known, but no combinatorial proof of it is. In this paper we provide a bijective proof of Theorem \ref{theoremStanley}, based on the concept of sijections, for which we give a short introduction here. In a recent paper by Fischer and Konvalinka \cite{amsBijection} the concept of signed sets and sijections was introduced. Sijections take the role of bijections for signed sets.

A \emph{signed set} is a finite set $S$ together with a weight function $w:S\rightarrow \{-1,1\}$. Define the weight of the whole set $w(S):=\sum_{s\in S} w(s)$. We write $w_S$ if the signed set is not clear from the context.

A \emph{sijection} $f$ between signed sets $S$ and $T$ is an involution on the set $S \sqcup T$, where $\sqcup$ denotes the disjoint union, such that for~$x \in S \sqcup T$:
$$w(f(x))= \begin{cases}-w(x),&\text{ if }(x \in S\text{ and }f(x)\in S)\text{ or }(x \in T\text{ and }f(x)\in T)\\
w(x),&\text{ otherwise.}\end{cases}$$

\begin{rem}	
	The motivation behind this definition is the following: If we have a sijection between $A$ and $B$ then $w(A)=w(B)$. This is analogous to the fact that if we have a bijection between $A$ and $B$, then $|A|=|B|$. 
	Furthermore, a sijection $A \leftrightarrow B$ is equivalent to a bijection
	$$\{a \in A: w_A(a)=1\} \cup \{b \in B: w_B(b)=-1\} \leftrightarrow \{a \in A: w_A(a)=-1\} \cup \{b \in B: w_B(b)=1\}.$$
\end{rem}

If $A$ is a signed set, we define $-A$ as a copy of $A$, except we have $w_{-A}(a):=-w_{A}(a)$.

If $A$ and $B$ are signed sets, define $A+B$ as the set $A \sqcup B$ together with the weight function $$w_{A+B}(x) := \begin{cases}w_A(x),&\text{if }x\in A\\w_B(x),&\text{if }x\in B.\end{cases}$$

Similarly we define $A-B$ as the signed set $A+(-B)$.

We also define the cross product $A \times B$ of two signed sets $A$ and $B$ to have weight function $w_{A\times B}(x) := w_A(x)\cdot w_B(x)$.

\begin{rem}
	\label{remSubtract}	
	If we have a sijection $A \leftrightarrow B+C$ then we have also a sijection $A - B \leftrightarrow C$. In fact it is the same involution on the set $A \sqcup B \sqcup C$. Also, if we have a sijection $A\leftrightarrow B$ and another sijection $B \leftrightarrow C$, we can infer a sijection $A\leftrightarrow C$, as shown in \cite{amsBijection}.
\end{rem}

We define a simple, but crucial signed set $\alpha_i$ now, which we use it to handle alternating signs. One example is the expression $(-2)^k$ in Theorem \ref{theoremStanley}.
\begin{dfn}
	\label{defAlpha}
	Define $\alpha_i$ to be the signed set containing a single element of weight $(-1)^i$.
\end{dfn}

We start the proof by considering the one-dimensional version of this problem, i.e. rook walks on grids of height $1$. For that we define $S_{m,k} := S_{m,1,k}$. The two-dimensional problem is then related to the one-dimensional problem by the following lemma:
\begin{lemma}
	\label{twoToTwoOne}
	We have a bijection
	$$S_{m,n,k} \longleftrightarrow \sum_{i=0}^k \binom{[k]}{i} \times S_{m,i} \times S_{n,k-i}.$$
\end{lemma}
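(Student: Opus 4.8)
The plan is to realise the obvious ``separate the horizontal moves from the vertical moves'' bijection and then to check carefully that it is well defined. Note first that $S_{m,n,k}$, $\binom{[k]}{i}$, $S_{m,i}$ and $S_{n,k-i}$ all carry the constant weight $+1$, so the right-hand side is an ordinary set and it suffices to produce a genuine bijection of sets; the sijection is then this bijection together with its inverse.

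I would first fix notation: regard a walk $w \in S_{m,n,k}$ as a sequence of squares $p_0, p_1, \dots, p_k$ on the board with $p_0 = p_k$ the bottom-left square and, for each $j \in [k]$, $p_{j-1} \neq p_j$ lying in a common row or column. Thus step $j$ is either \emph{horizontal} (same row, different column) or \emph{vertical} (same column, different row); let $I = I(w) \subseteq [k]$ be the set of horizontal steps and $i = |I|$. Define $\Phi(w) = (I, u, v)$, where $u$ is the sequence of first (column) coordinates of $p_0$ and of the squares reached after each horizontal step, in order, and $v$ is the analogous sequence of second coordinates at $p_0$ and after each vertical step. The key point to verify is that $u$ genuinely lies in $S_{m,i}$: consecutive entries of $u$ differ because each horizontal step is an honest rook move, and $u$ is closed because the first coordinate is unaffected by vertical steps, so after the last horizontal step it has returned to the first coordinate of $p_0$; symmetrically $v \in S_{n,k-i}$. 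Hence $\Phi$ maps $S_{m,n,k}$ into $\sum_{i=0}^k \binom{[k]}{i} \times S_{m,i} \times S_{n,k-i}$.

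For the inverse $\Psi$, given $i$, a subset $I \in \binom{[k]}{i}$, and walks $u \in S_{m,i}$ and $v \in S_{n,k-i}$, I would interleave them: starting at the bottom-left square, at the $\ell$-th smallest element of $I$ perform the $\ell$-th step of $u$ (changing only the first coordinate, as $u$ prescribes) and at the $\ell$-th smallest element of $[k] \setminus I$ perform the $\ell$-th step of $v$. Each step so defined is a legitimate rook move since the corresponding step of $u$ or of $v$ is, and the resulting walk is closed because $u$ and $v$ are; thus $\Psi(I,u,v) \in S_{m,n,k}$. It is then routine that $\Phi$ and $\Psi$ are mutually inverse: interleaving the two projected subsequences along $I$ reconstructs $w$ step by step, and projecting an interleaved walk onto its two coordinate sequences recovers $(I, u, v)$.

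I expect the only real subtlety, and the step to be most careful about, is exactly this well-definedness: that the two projected subsequences are honest \emph{closed} one-dimensional rook walks, the closedness coming precisely from the fact that a horizontal step leaves the vertical coordinate fixed and vice versa, and dually that the interleaved walk never produces a degenerate zero-length step. The edge cases $i = 0$ and $i = k$, in which one of $u, v$ is the unique length-$0$ walk, deserve a remark but cause no difficulty. Everything else is bookkeeping.
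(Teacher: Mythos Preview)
Your proposal is correct and follows essentially the same approach as the paper: project the two-dimensional walk onto its horizontal and vertical components to obtain $(u,v)\in S_{m,i}\times S_{n,k-i}$, and record the set $I\in\binom{[k]}{i}$ of positions of horizontal steps. The paper's argument is considerably terser, omitting the explicit verification of well-definedness and the inverse that you spell out, but the underlying bijection is identical.
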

\begin{proof}
	Let $i$ be the number of horizontal steps of the whole two-dimensional walk. Projecting the two-dimensional walk on the horizontal and vertical axis, we get a pair of walks in $S_{m,i} \times S_{n,k-i}$. The information we lost in this way is exactly which $i$ of the $k$ steps belong to the horizontal walk, which is an element $I \in \binom{[k]}{i}$, an $i$-subset of $[k]$. The result follows.
\end{proof}

We represent elements in $S_{m,k}$ by sequences in $[m-1]^k$ whose total sum is $\equiv 0$ (mod $m$). As noted in Remark \ref{remTorus}, we can imagine the board to wrap around, such that we can interpret each number as the number of squares the rook moves rightwards. We then end on the starting square if and only if the sum of all steps is divisible by $m$.
For example in Figure \ref{fig:example} the walk moves horizontally at positions $\{1,3,5,6,8\}$ and vertical at positions $\{2,4,7\}$, so we have $i=5$. The horizontal subwalk is $(2,1,1,2,4)$ while the vertical subwalk is $(3,2,3)$. Note that these two subwalks sum to $\equiv 0$ (mod $5$) and $\equiv 0$ (mod $4$) respectively.

\begin{lemma}
	\label{OneDim}
	We have a sijection $$[m]\times S_{m,i} \leftrightarrow [m-1]^i + \alpha_i \times [m-1],$$	
	where $S_{m,i}$ is now to be understood as signed set with weight function $w(x) := 1$.
\end{lemma}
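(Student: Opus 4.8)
The plan is to pass to sequences modulo $m$, isolate a single ``core'' sijection, and prove that core by induction on $i$. For $c\in\mathbb{Z}/m$ write $R_c^{(i)}:=\{s\in[m-1]^i:\sum_j s_j\equiv c\pmod m\}$, viewed as a signed set with all weights $+1$, so that $R_0^{(i)}=S_{m,i}$ and $[m-1]^i=\bigsqcup_{c\in\mathbb{Z}/m}R_c^{(i)}$. Since $S_{m,i}$ carries the all-$+1$ weight, $[m]\times S_{m,i}$ is just $m$ disjoint copies of $R_0^{(i)}$, which I index by $c=0,1,\dots,m-1$. On the copy $c=0$ I use the identity onto $R_0^{(i)}\subseteq[m-1]^i$, and for each $c\neq 0$ I will produce a sijection from that copy to $R_c^{(i)}+\alpha_i$. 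Assembling all $m$ copies — using transitivity of sijections and the subtraction principle of Remark~\ref{remSubtract}, plus the trivial fact that $X\leftrightarrow Y$ yields $-X\leftrightarrow -Y$ — then gives
$$[m]\times S_{m,i}\ \leftrightarrow\ R_0^{(i)}+\sum_{c=1}^{m-1}\bigl(R_c^{(i)}+\alpha_i\bigr)\ =\ \Bigl(\bigsqcup_{c\in\mathbb{Z}/m}R_c^{(i)}\Bigr)+\bigl(\alpha_i\times[m-1]\bigr)\ =\ [m-1]^i+\alpha_i\times[m-1].$$

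It therefore suffices to construct, for each fixed $c\in\{1,\dots,m-1\}$ and every $i\ge0$, a sijection $R_0^{(i)}-R_c^{(i)}\leftrightarrow\alpha_i$ (equivalently, by Remark~\ref{remSubtract}, $R_0^{(i)}\leftrightarrow R_c^{(i)}+\alpha_i$), which I do by induction on $i$. For $i=0$ one has $R_0^{(0)}=\{()\}$ and $R_c^{(0)}=\emptyset$, so $R_0^{(0)}-R_c^{(0)}$ is a single element of weight $+1$, matched bijectively with $\alpha_0$. For the step, consider the ``shift the first coordinate by $c$'' map $(s_1,\dots,s_i)\mapsto(s_1+c\bmod m,\,s_2,\dots,s_i)$; since $s_1\not\equiv0$, this is a weight-preserving bijection from $\{s\in R_0^{(i)}:s_1\neq m-c\}$ onto $\{s\in R_c^{(i)}:s_1\neq c\}$. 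Cancelling these matched elements inside $R_0^{(i)}-R_c^{(i)}$ reduces the task to $\{s\in R_0^{(i)}:s_1=m-c\}-\{s\in R_c^{(i)}:s_1=c\}\leftrightarrow\alpha_i$; deleting the now-forced first coordinate identifies the first leftover set with $R_c^{(i-1)}$ and the second with $R_0^{(i-1)}$, so the task becomes $R_c^{(i-1)}-R_0^{(i-1)}\leftrightarrow\alpha_i$. Negating the inductive hypothesis $R_0^{(i-1)}-R_c^{(i-1)}\leftrightarrow\alpha_{i-1}$ and using $-\alpha_{i-1}=\alpha_i$ then completes the step.

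The combinatorial content is light — one first-step shift and one coordinate deletion — so the main (if modest) obstacle is the sign bookkeeping. Stating the core claim in the subtracted form $R_0^{(i)}-R_c^{(i)}\leftrightarrow\alpha_i$ is what makes the ``cancel the matched pairs, recurse on the leftovers'' argument transparent and turns the sign change between consecutive levels into the clean identity $-\alpha_{i-1}=\alpha_i$. The one genuine verification is that the first-step shift really is a bijection between the sets $\{s_1\neq m-c\}$ and $\{s_1\neq c\}$ in both directions, and that, after deleting $s_1$, the two leftover sets are precisely $R_c^{(i-1)}$ and $R_0^{(i-1)}$.
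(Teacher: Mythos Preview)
Your proof is correct, and it takes a genuinely different route from the paper's. The paper gives a single explicit, non-recursive bijection: for $j\in[m-1]$ it sends $(j,(a_1,\dots,a_i))$ to $(j,a_1,\dots,a_{i-1})$, and for $j=m$ it uses the structure of \emph{alternating} sequences $(x,m-x,x,\dots)$ to handle the remaining case in one shot, splitting on whether the walk is alternating or not and locating the rightmost position where it fails to be. Your argument instead partitions $[m-1]^i$ by residue class, reduces to the core statement $R_0^{(i)}-R_c^{(i)}\leftrightarrow\alpha_i$ for each $c\ne0$, and proves that by induction on $i$ via a first-coordinate shift. The paper's approach buys an immediately computable map with no recursion to unwind; yours buys a cleaner modular structure with essentially no case analysis, and it makes the appearance of $\alpha_i$ transparent through the identity $-\alpha_{i-1}=\alpha_i$. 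The only small thing worth making explicit, since the paper's Remark~\ref{remSubtract} does not state it, is that sijections add over disjoint unions (if $A_c\leftrightarrow B_c$ for each $c$, then $\sum_c A_c\leftrightarrow\sum_c B_c$); you use this both to assemble the $m$ copies and to cancel the matched pairs inside $R_0^{(i)}-R_c^{(i)}$.
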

\begin{proof}
	
	We consider $i$ even and $i$ odd separately. So we want to show the following bijections
	\begin{enumerate}[(i)]
		\item For all $m$ and even $i$: $[m]\times S_{m,i} \leftrightarrow [m-1]+[m-1]^i$.
		\item For all $m$ and odd $i$: $[m]\times S_{m,i} + [m-1]\leftrightarrow [m-1]^i$.
	\end{enumerate}
	
	We define the set of alternating sequences in $[m-1]^i$ as $$[m-1]^i_{a} := \left \{s \in [m-1]^i : s \text{ is of the form } s=(x,m-x,x,m-x,\ldots) \right\},$$
	and the non-alternating sequences as $$[m-1]^i_{na} := [m-1]^i \setminus [m-1]^i_{a}.$$
	
	Note that we have a simple bijection $[m-1] \leftrightarrow [m-1]^i_a$, as an alternating sequence is determined by its first element.

	Now, for $i=0$, note that there is only one walk of length $0$, so we have
	$$[m]\times S_{m,0} \leftrightarrow [m] \leftrightarrow [m-1] + \{m\} \leftrightarrow [m-1] + [m-1]^0.$$

	Now suppose that $i \ge 1$, take an element in $[m] \times S_{m,i}$ and define a bijection $f$ as follows:
	\begin{enumerate}[(a)]
		\item For $x = (j,(a_1,\ldots,a_i)) \in [m] \times S_{m,i}$ with $j \in [m-1]$, define $f(x):=(j,a_1,\ldots,a_{i-1}) \in [m-1]^i$. This operation maps to exactly those sequences $(b_1,\ldots,b_i) \in [m-1]^i$ with $\sum_{q=2}^i b_q \not \equiv 0$ (mod $m$), because $a_i + \sum_{q=2}^i b_q \equiv \sum_{q=1}^i a_q \equiv 0$ and $a_i \not \equiv 0$ (mod $m$).
		\item For $x = (m, (a_1,\ldots,a_i)) \in [m] \times S_{m,i}$ with $(a_1,\ldots,a_i) \in [m-1]^i_a$, define $f(x):=a_1$. This case is only possible for $i$ even and we map to exactly $[m-1]$.
		\item For $x = (m,(a_1,\ldots,a_i)) \in [m] \times S_{m,i}$, with $(a_1,\ldots,a_i) \in [m-1]^i_{na}$, we do the following: Let $0 \le p < i $ be minimal such that $a_{i-p} + (-1)^p a_1 \not \equiv 0$ (mod $m$), which exists as $(a_1,\ldots,a_i)$ is not alternating. Now define $f(x):=(a_1,\ldots,a_{i-p-1},a_{i-p}+(-1)^p a_1,(-1)^{p-1} a_1,(-1)^{p-2} a_1,\ldots,(-1)^0 a_1)$, where we use modular arithmetic mod $m$. This operation maps to exactly those sequences $(b_1,\ldots,b_i) \in [m-1]^i_{na}$ with $\sum_{q=2}^i b_q \equiv 0$ (mod $m$). 
	\end{enumerate}
	
	For bijection (i), we map to exactly $[m-1]$ from (b) and $[m-1]^i$ from (a) and (c).
	For bijection (ii), we map to exactly $[m-1]^i_{na}$ from (a) and (c). The result follows.
	
\end{proof}

\begin{lemma}
	\label{evenOdd}
	We have a bijection $$[\ell-1]^k + \sum_{i=0:i\text{ odd}}^k \binom{[k]}{i} \times [\ell]^{k-i} \longleftrightarrow \sum_{i=0:i\text{ even}}^k \binom{[k]}{i} \times [\ell]^{k-i}.$$
\end{lemma}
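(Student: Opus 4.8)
The plan is to read both sides as collections of \emph{words} of length $k$ over the $(\ell+1)$-letter alphabet $[\ell]\cup\{*\}$, sort these words by the parity of the number of $*$'s they contain, and then exhibit one explicit involution on all such words which is sign-reversing for the sign $(-1)^{\#*}$ and whose fixed points are precisely the copy of $[\ell-1]^k$ sitting on the left-hand side.

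First I would set up the dictionary. Given $i$, an element of $\binom{[k]}{i}\times[\ell]^{k-i}$ is recorded as the word $w\in([\ell]\cup\{*\})^k$ that has a $*$ in exactly the $i$ positions prescribed by the subset $I\in\binom{[k]}{i}$ and, reading left to right, fills the remaining $k-i$ positions with the entries of the chosen sequence in $[\ell]^{k-i}$ (this is the same bookkeeping already used implicitly in Lemma~\ref{twoToTwoOne}). Summing over $i$, this identifies $\sum_{i\text{ odd}}\binom{[k]}{i}\times[\ell]^{k-i}$ with the set $O$ of words having an odd number of $*$'s, and $\sum_{i\text{ even}}\binom{[k]}{i}\times[\ell]^{k-i}$ with the set $E$ of words having an even number of $*$'s. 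Moreover $[\ell-1]^k$ is identified with the subset $F\subseteq E$ of words using \emph{neither} of the two letters $*$ and $\ell$. So the lemma reduces to producing a bijection $F + O \longleftrightarrow E$.

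The key step is the involution $g$ on the whole word set $W=([\ell]\cup\{*\})^k$: if $w\in F$, set $g(w)=w$; otherwise $w$ contains a $*$ or an $\ell$, so let $j$ be the leftmost position whose letter is $*$ or $\ell$, and let $g(w)$ be $w$ with that letter toggled ($*\mapsto\ell$, $\ell\mapsto*$). Then $g$ is an involution, since toggling at position $j$ leaves positions $1,\dots,j-1$ untouched and keeps position $j$ a ``$*$-or-$\ell$'' position, so the leftmost such position is unchanged; its fixed-point set is exactly $F$; and off $F$ it changes $\#*$ by exactly $1$, hence interchanges $O$ with $E\setminus F$. I would then record the trivial bookkeeping: the restriction $g|_{F+O}$ is the identity on $F$ (with $F\subseteq E$) and a bijection $O\to E\setminus F$, so it is a bijection onto $F\cup(E\setminus F)=E$. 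As a sanity check, $|E|=\tfrac12\big((\ell+1)^k+(\ell-1)^k\big)=|F|+|O|$.

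I do not anticipate a real obstacle; the one point needing care is checking that $g$ actually sends $O$ into $E\setminus F$ rather than merely into $E$, i.e.\ that after toggling the word still contains a $*$ or an $\ell$. If $w\in O$ has its leftmost $*$-or-$\ell$ at a $*$, toggling creates an $\ell$ there; if it occurs at an $\ell$, then $w$ had an odd, hence positive, number of $*$'s, so after toggling that $\ell$ to a $*$ the word still has at least one $*$ elsewhere. Running the same observation backwards shows $g$ maps $E\setminus F$ into $O$, confirming the two-sided inverse. Degenerate cases ($k=0$, or $\ell=1$ where $[\ell-1]^k$ is empty for $k\ge1$) are immediate and need no separate treatment.
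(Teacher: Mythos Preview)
Your proof is correct and is essentially the paper's argument in different clothing: the paper encodes $(I,s)\in\binom{[k]}{i}\times[\ell]^{k-i}$ as a pair $(S,x)\in\binom{[k]}{i}\times[\ell]^k$ with $x_j=\ell$ for all $j\in S$, which is exactly your word over $[\ell]\cup\{*\}$ with $\ell$ written in place of each $*$, and under this dictionary your ``toggle the leftmost $*$-or-$\ell$'' involution is precisely the paper's ``toggle the smallest $m$ with $x_m=\ell$ in or out of $S$'' (which the paper attributes to a Garsia--Milne-type idea). Your final paragraph is slightly over-argued---after toggling, position $j$ itself still carries a $*$ or an $\ell$, so $g(w)\notin F$ is immediate---but the conclusion is of course correct.
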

\begin{proof}
	First we define:
	$$[\ell]_i^k := \left \{(S,x) \in \binom{[k]}{i} \times [\ell]^k : \forall j \in S\ x_j = \ell \right \}$$
	for which we have a simple bijection
	$$[\ell]_i^k \longleftrightarrow \binom{[k]}{i} \times [\ell]^{k-i},$$
	that takes an element $(S,x) \in \binom{[k]}{i} \times [\ell]^k$ and removes from $x$ the elements at the positions in $S$. As all of these were copies of $\ell$, we can undo this operation. Now we can restate the lemma: We want to find a bijection
	$$[\ell-1]^k + \sum_{i=0:i\text{ odd}}^k [\ell]_i^k \longleftrightarrow \sum_{i=0:i\text{ even}}^k [\ell]_i^k.$$
	
	Using an idea from Garsia and Milne, as presented in \cite{zeilberger84} by Zeilberger, we define a bijection $f$ as follows:
	\begin{enumerate}[(a)]
		\item For $x \in [\ell-1]^k$, let $f(x)= (\emptyset,x) \in [\ell]_0^k$.
		\item For $(S,x) \in [\ell]_i^k$ with $i$ odd, let $m$ be the smallest index $m$ with $x_m=\ell$, which exists as $i \ge 1$. If $m \in S$ let $f((S,x))= (S\setminus \{m\},x) \in [\ell]_{i-1}^k$. If $m \notin S$ let $f((S,x))= (S\cup \{m\},x) \in [\ell]_{i+1}^k$.
	\end{enumerate}
	To see why this is invertible, we state $f^{-1}$:
	\begin{enumerate}[(a)]
		\item For $(S,x) \in [\ell]_i^k$ with $i$ even and $x$ not containing a copy of $\ell$, we define $f^{-1}((S,x)):=x$.
		\item For $(S,x) \in [\ell]_i^k$ with $i$ even and $x$ containing a copy of $\ell$, let $m$ be the smallest index $m$ with $x_m=\ell$, which exists by assumption. If $m \in S$ let $f^{-1}((S,x))= (S\setminus \{m\},x) \in [\ell]_{i-1}^k$. If $m \notin S$ let $f^{-1}((S,x))= (S\cup \{m\},x) \in [\ell]_{i+1}^k$.
	\end{enumerate}
\end{proof}

\begin{cor}
	\label{mainCor}
	We have a sijection $$\sum_{i=0}^k \binom{[k]}{i} \times \alpha_i \times [\ell]^{k-i} \longleftrightarrow [\ell-1]^k .$$
\end{cor}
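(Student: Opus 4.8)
The plan is to reduce the statement to Lemma \ref{evenOdd} by unwinding the definition of $\alpha_i$. First I would observe that $\binom{[k]}{i}$ and $[\ell]^{k-i}$ carry the constant weight function $w\equiv 1$, while $\alpha_i$ consists of a single element of weight $(-1)^i$; hence the cross product $\binom{[k]}{i}\times\alpha_i\times[\ell]^{k-i}$ is nothing but a copy of the ordinary set $\binom{[k]}{i}\times[\ell]^{k-i}$ in which every element has weight $(-1)^i$. In other words, mapping $(S,a,x)\mapsto(S,x)$ (with $a$ the unique element of $\alpha_i$) gives a sijection
$$\binom{[k]}{i}\times\alpha_i\times[\ell]^{k-i} \longleftrightarrow \begin{cases}\binom{[k]}{i}\times[\ell]^{k-i}, & i\text{ even},\\ -\left(\binom{[k]}{i}\times[\ell]^{k-i}\right), & i\text{ odd}.\end{cases}$$
Writing $E:=\sum_{i\text{ even}}\binom{[k]}{i}\times[\ell]^{k-i}$ and $O:=\sum_{i\text{ odd}}\binom{[k]}{i}\times[\ell]^{k-i}$ (both genuine unsigned sets) and summing over $i$, this produces a sijection
$$\sum_{i=0}^k \binom{[k]}{i}\times\alpha_i\times[\ell]^{k-i} \longleftrightarrow E - O.$$

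Next I would invoke Lemma \ref{evenOdd}, which in the present notation states $[\ell-1]^k + O \leftrightarrow E$, equivalently $E \leftrightarrow O + [\ell-1]^k$. Applying the first part of Remark \ref{remSubtract} with $B=O$ and $C=[\ell-1]^k$ (that is, keeping the same involution on $E\sqcup O\sqcup[\ell-1]^k$), this yields a sijection $E - O \leftrightarrow [\ell-1]^k$. Finally, composing this with the sijection of the previous paragraph via the transitivity statement in Remark \ref{remSubtract} gives the desired
$$\sum_{i=0}^k \binom{[k]}{i}\times\alpha_i\times[\ell]^{k-i} \longleftrightarrow [\ell-1]^k.$$

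I do not expect a real obstacle here: the content of the corollary is entirely in Lemma \ref{evenOdd}, and the present statement is just its repackaging in the language of the signed sets $\alpha_i$. The only point requiring a little care is the sign bookkeeping when replacing each factor $\alpha_i$ by an honest $\pm$ sign on $\binom{[k]}{i}\times[\ell]^{k-i}$, and checking that $E$ and $O$ are exactly the positive and negative parts of the left-hand side, so that Remark \ref{remSubtract} applies verbatim; everything else is a direct citation of the formal properties of sijections recorded earlier.
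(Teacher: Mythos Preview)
Your proposal is correct and follows exactly the same approach as the paper, which simply says the corollary follows from Lemma~\ref{evenOdd} by Remark~\ref{remSubtract} and Definition~\ref{defAlpha}. You have merely unpacked that one-line argument in full detail.
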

\begin{proof}
	This follows from Lemma \ref{evenOdd} directly by Remark \ref{remSubtract} and Definition \ref{defAlpha}.
\end{proof}

\begin{lemma}
	\label{simpleLemma}
	We have a sijection $$\sum_{i=0}^k \binom{[k]}{i} \times \alpha_i \times \alpha_{k-i} \leftrightarrow [2]^k \times \alpha_k$$
\end{lemma}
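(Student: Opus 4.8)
The plan is to notice that this lemma, unlike the earlier ones, requires no genuine rearrangement: both signed sets are homogeneous in sign. On the right-hand side, since $\alpha_k$ is a one-element set of weight $(-1)^k$, the signed set $[2]^k \times \alpha_k$ has underlying set a copy of $[2]^k$ and constant weight $(-1)^k$. On the left-hand side, a generic element of the $i$-th summand is a triple $(I,a_i,a_{k-i})$ with $I\in\binom{[k]}{i}$ and $a_i,a_{k-i}$ the unique elements of $\alpha_i,\alpha_{k-i}$; its weight is $1\cdot(-1)^i\cdot(-1)^{k-i}=(-1)^k$, \emph{independent of $i$}. Summing over $i$, the left-hand side has $\sum_{i=0}^k\binom{k}{i}=2^k$ elements, each of weight $(-1)^k$, exactly like the right-hand side.

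First I would recall that a weight-preserving bijection of underlying sets between two signed sets is automatically a sijection: extend it to the involution on the disjoint union that transposes each element with its image, and observe that the defining condition holds because we are always in the ``otherwise'' case, so it suffices that the weights agree. Since both signed sets above have all weights equal to $(-1)^k$, \emph{any} bijection of underlying sets is weight-preserving, so it is enough to write down a convenient one.

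The bijection I would use is the indicator-vector correspondence between subsets of $[k]$ and $\{1,2\}$-sequences of length $k$. Concretely, map the element $(I,a_i,a_{k-i})$ of the $i$-th summand (so that $i=|I|$) to $(s,c)\in[2]^k\times\alpha_k$, where $c$ is the unique element of $\alpha_k$ and $s\in[2]^k$ is given by $s_j=2$ if $j\in I$ and $s_j=1$ otherwise. As $i$ runs over $\{0,\dots,k\}$ and $I$ over $\binom{[k]}{i}$, the sequence $s$ runs over all of $[2]^k$, and the inverse simply sets $I=\{j:s_j=2\}$ (so $i=|I|$) and returns the unique elements of $\alpha_i$ and $\alpha_{k-i}$. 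This is the desired sijection.

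There is essentially no obstacle here; the one point deserving care is the sign bookkeeping on the left, namely verifying $(-1)^i(-1)^{k-i}=(-1)^k$ so that the summands are sign-homogeneous — this is precisely the feature that makes the ``just match the underlying sets'' strategy legitimate, and it is the signed-set shadow of the trivial identity $\sum_{i=0}^k\binom{k}{i}(-1)^i(-1)^{k-i}=(-2)^k$.
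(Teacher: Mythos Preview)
Your proof is correct and follows essentially the same approach as the paper: the paper also observes that $\alpha_i\times\alpha_{k-i}\leftrightarrow\alpha_k$ (your sign-homogeneity check) and then uses the standard bijection $\sum_{i=0}^k\binom{[k]}{i}\leftrightarrow\mathcal{P}([k])\leftrightarrow[2]^k$ (your indicator-vector map). Your write-up is more explicit about why a weight-preserving bijection suffices, but the underlying argument is identical.
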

\begin{proof}
	We have $\alpha_i \times \alpha_{k-i} \leftrightarrow \alpha_k$: Both sides are singleton sets with an element of equal weight whether $k$ and $i$ are odd or even. 
	We also have a bijection
	$$\sum_{i=0}^k \binom{[k]}{i} \leftrightarrow \mathcal{P}([k]) \leftrightarrow [2]^k.$$
	The result follows.
\end{proof}

\begin{thm}
	\label{thmMain}
	We have a bijection
	$$[m]\times [n]\times S_{m,n,k} \leftrightarrow [m+n-2]^k + [n-1]\times [m-2]^k + [m-1]\times [n-2]^k + [m-1]\times [n-1]\times \alpha_k\times [2]^k.$$
\end{thm}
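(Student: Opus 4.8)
The plan is to start from the left-hand side and rewrite it step by step using the lemmas already established, expanding a product of two binomial-type expressions and matching each of the four resulting pieces to one summand on the right-hand side.

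First I would apply Lemma~\ref{twoToTwoOne}, multiplied on both sides by $[m]\times[n]$, and regroup the cross products (which are associative and commutative up to the obvious bijection of signed sets) to obtain a sijection
$$[m]\times[n]\times S_{m,n,k}\;\leftrightarrow\;\sum_{i=0}^k\binom{[k]}{i}\times\big([m]\times S_{m,i}\big)\times\big([n]\times S_{n,k-i}\big).$$
Next, applying Lemma~\ref{OneDim} to each of the factors $[m]\times S_{m,i}$ and $[n]\times S_{n,k-i}$ and distributing $\times$ over $+$, the right-hand side turns into a sum over $i$ of $\binom{[k]}{i}$ times the four products
$$[m-1]^i\times[n-1]^{k-i},\qquad [m-1]^i\times\alpha_{k-i}\times[n-1],\qquad \alpha_i\times[m-1]\times[n-1]^{k-i},\qquad \alpha_i\times[m-1]\times\alpha_{k-i}\times[n-1].$$

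It then remains to evaluate the four sums separately and add them. For the first, $\sum_{i=0}^k\binom{[k]}{i}\times[m-1]^i\times[n-1]^{k-i}$ is in bijection with the set of length-$k$ words over the disjoint union $[m-1]\sqcup[n-1]$ (the subset records which positions carry a letter from the $[m-1]$-part), hence with $[m+n-2]^k$. For the second, factor out $[n-1]$ and reindex $i\mapsto k-i$, using the complementation bijection $\binom{[k]}{i}\leftrightarrow\binom{[k]}{k-i}$, to bring the remaining sum into the exact shape of Corollary~\ref{mainCor} with $\ell=m-1$; this yields $[n-1]\times[m-2]^k$. The third sum, after factoring out $[m-1]$, is already of the form of Corollary~\ref{mainCor} with $\ell=n-1$, giving $[m-1]\times[n-2]^k$. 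For the fourth, factor out $[m-1]\times[n-1]$ and apply Lemma~\ref{simpleLemma} to the remaining $\sum_{i=0}^k\binom{[k]}{i}\times\alpha_i\times\alpha_{k-i}$, giving $[m-1]\times[n-1]\times\alpha_k\times[2]^k$. Summing the four pieces and composing all the intermediate sijections by Remark~\ref{remSubtract} produces exactly the right-hand side claimed.

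I do not expect a genuine obstacle here: the argument is an assembly of previously established sijections together with the formal manipulation rules for signed sets — that $\times$ is commutative and associative and distributes over $+$ up to canonical bijection, and that a componentwise sijection between sums of signed sets is again a sijection. The one step that needs a little care is the reindexing $i\mapsto k-i$ in the second term, where one must match the substitution with the complementation of subsets so that the hypothesis of Corollary~\ref{mainCor} applies verbatim; everything else is bookkeeping.
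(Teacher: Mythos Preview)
Your proposal is correct and follows essentially the same route as the paper's proof: apply Lemma~\ref{twoToTwoOne}, then Lemma~\ref{OneDim} to each factor, expand into four sums, and evaluate them via the word bijection $\sum_i\binom{[k]}{i}\times[m-1]^i\times[n-1]^{k-i}\leftrightarrow[m+n-2]^k$, Corollary~\ref{mainCor} twice, and Lemma~\ref{simpleLemma}. You are in fact more explicit than the paper about the reindexing $i\mapsto k-i$ (via complementation of subsets) needed to bring the second sum into the exact form of Corollary~\ref{mainCor}, which the paper leaves implicit.
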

\begin{proof}
	By Lemma \ref{twoToTwoOne}, Lemma \ref{OneDim}, Corollary \ref{mainCor} and Lemma \ref{simpleLemma} we have
	\begin{align*}
	&[m]\times [n]\times S_{m,n,k}\\
	&\leftrightarrow \sum_{i=0}^k \binom{[k]}{i}\times [m] \times S_{m,i} \times [n]\times S_{n,k-i}\\	
	&\leftrightarrow \sum_{i=0}^k \binom{[k]}{i} \times \left( [m-1]^i + \alpha_i \times [m-1]\right)\times \left( [n-1]^{k-i} + \alpha_{k-i} \times [n-1]\right)\\
	&=\sum_{i=0}^k \binom{[k]}{i} \times [m-1]^i \times [n-1]^{k-i}
	+\sum_{i=0}^k \binom{[k]}{i} \times [m-1]^i \times \alpha_{k-i} \times [n-1]\\
	&\quad \quad +\sum_{i=0}^k \binom{[k]}{i} \times \alpha_i \times [m-1] \times [n-1]^{k-i}
	+\sum_{i=0}^k \binom{[k]}{i} \times \alpha_i \times [m-1] \times \alpha_{k-i} \times [n-1]\\
	&\leftrightarrow [m+n-2]^k + [n-1]\times [m-2]^k + [m-1]\times [n-2]^k + [m-1]\times [n-1]\times \alpha_k\times [2]^k.
	\end{align*}
\end{proof}
Theorem \ref{theoremStanley} follows immediately from Theorem \ref{thmMain}, by evaluating the weight of both sides.

\printbibliography

\addcontentsline{toc}{section}{Bibliography}

\end{document}